\documentclass[preprint,11pt]{elsarticle}
\usepackage{lineno,hyperref}
\modulolinenumbers[5]

\usepackage{color}
\usepackage{amsmath}
\usepackage{amsfonts,amsthm,amssymb}
\usepackage{amsfonts}
\usepackage{graphics}
\usepackage{pstricks}
\usepackage{graphicx}
\usepackage{cleveref}
\usepackage{extarrows,chngpage,array,float}
\usepackage{amssymb}
\usepackage{latexsym,bm}
\newtheorem{theorem}{Theorem}[section]
\newtheorem{lemma}[theorem]{Lemma}
\newtheorem{definition}[theorem]{Definition}
\newtheorem{proposition}[theorem]{Proposition}

\newtheorem{remark}[theorem]{Remark}

\allowdisplaybreaks   

\numberwithin{equation}{section}

\makeatletter
\def\ps@pprintTitle{}
\makeatother

\begin{document}

\makeatletter
\def\ps@pprintTitle{}
\makeatother

\begin{frontmatter}

\title{Irreducibility of interlace polynomials}

\author{Jungang Chen$^{a}$, \ \ Xian'an Jin$^{b,a}$ \ \ Tianlong Ma$^{c,}$\footnote{Corresponding author}
	\\[1ex]
\small  $^a$ School of Mathematical Sciences\\[-0.8ex]
\small Xiamen University\\[-0.8ex]
\small P. R. China\\
\small $^b$School of Mathematics and Statistics\\[-0.8ex]
\small Qinghai Minzu University\\[-0.8ex]
\small P. R. China\\
\small  $^c$ School of Science\\[-0.8ex]
\small Jimei University\\[-0.8ex]
\small P. R. China\\
\small\tt Email: jgchen@stu.xmu.edu.cn, xajin@xmu.edu.cn, tianlongma@aliyun.com}

\begin{abstract}
The factorisation of graph polynomials often reflects combinatorial decomposition. For a
nonempty loopless graph $G$, we first prove that the two-variable
interlace polynomial $q(G;x,y)$, introduced by Arratia, Bollob\'as and Sorkin, is irreducible over $\mathbb{C}[x,y]$ if and only if $G$ is
connected, exactly paralleling the classical irreducibility theorem for the Tutte polynomial.
The loopless hypothesis is essential: we construct an infinite family of connected looped
graphs whose two-variable interlace polynomials are reducible. For a nonempty graph
$G$, we prove that Courcelle's multivariate interlace polynomial
$C_G(u,v;\mathbf{x},\mathbf{y})$ is irreducible over
$\mathbb{C}[u,v,x_a,y_a:a\in V(G)]$ if and only if $G$ is connected.
\end{abstract}

\begin{keyword}
interlace polynomial; irreducibility 
\MSC[2020] 05C31\sep 05B35\sep 05C50
\end{keyword}

\end{frontmatter}

\section{Introduction}

An important theme in the theory of graph and matroid polynomials is that algebraic
factorisation should reflect combinatorial decomposition. The classical example is the Tutte
polynomial. Let $T_M(x,y)$ denote the Tutte polynomial of a matroid $M$. If $M$ is the
direct sum of two matroids $M_1$ and $M_2$, then
\[
T_M(x,y)=T_{M_1}(x,y)T_{M_2}(x,y).
\]
Brylawski~\cite{Brylawski} conjectured in 1972 that the converse should also hold. This
conjecture was proved by Merino, de Mier and Noy~\cite{MerinoDeMierNoy}: if $M$ is
connected, then $T_M(x,y)$ is irreducible over $\mathbb{Z}[x,y]$; in fact, it is irreducible
over $\mathbb{C}[x,y]$.

This result suggests a natural question for other rank-nullity type graph polynomials.
Ellis-Monaghan, Goodall, Moffatt, Noble and
Vena~\cite{EllisMonaghanGoodallMoffattNobleVena} proved the analogue for the ribbon graph
polynomial $R(\mathbb{G};x,y)$: it is irreducible if and only if the embedded graph
$\mathbb{G}$ is neither a disjoint union nor a join. Irreducibility therefore detects a
factorisation finer than connectedness alone.
Goodall, Jouve and Sereni~\cite{GoodallJouveSereni} generalized the Merino--de~Mier--Noy
theorem to ranked sets via a functional form of the Brylawski relations. Their framework
unifies irreducibility results for several bivariate combinatorial polynomials, and
verifies the Bohn--Cameron--M\"uller conjecture (that the Galois group of $T(M;x,y)$ is
the full symmetric group) for infinite families of connected matroids.

In this paper we study analogous irreducibility questions for the two-variable interlace polynomial, introduced by 
Arratia, Bollob\'as and Sorkin. Since the interlace polynomial is defined through the adjacency matrix over
$\mathbb{F}_2$, multiple edges are indistinguishable from a single edge in its definition.
We therefore restrict attention throughout to graphs without multiple edges. 

The two-variable interlace polynomial
$q(G;x,y)$~\cite{ABS-two} (defined in Section~2) is multiplicative under disjoint unions,
so disconnected graphs yield reducible polynomials. The natural converse asks whether
reducibility can occur for connected graphs.


Our first result shows that, for loopless graphs, the answer is no.

\begin{theorem}\label{thm:two-var}
	Let $G$ be a loopless graph with at least one vertex. Then $q(G;x,y)$ is irreducible in $\mathbb{C}[x,y]$ if
	and only if $G$ is connected.
\end{theorem}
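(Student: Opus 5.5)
The plan is to pass to the rank–nullity form, study the Newton polygon, and argue as Merino, de Mier and Noy do for the Tutte polynomial.

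\textbf{Easy direction and normalisation.} If $G=G_1\sqcup G_2$ with both parts nonempty, then multiplicativity gives $q(G)=q(G_1)q(G_2)$. Each factor has positive degree, since the term for $S=V(G_i)$ is nonconstant. So disconnected graphs give reducible polynomials.

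For the converse, substitute $u=x-1$, $v=y-1$ and write
\[
Q(u,v)=\sum_{S\subseteq V}u^{r(S)}v^{|S|-r(S)},
\]
where $r(S)$ is the $\mathbb{F}_2$-rank of the adjacency matrix of $G[S]$. Irreducibility of $q$ and of $Q$ over $\mathbb{C}$ are equivalent. Since $G$ is loopless, each adjacency matrix is alternating, so every $r(S)$ is even.

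\textbf{Homogeneous components.} Let $Q_m$ be the homogeneous component of degree $m$; it collects the subsets with $|S|=m$. The first few are
\[
Q_0=1,\qquad Q_1=nv,\qquad Q_2=e\,u^2+\bigl(\tbinom n2-e\bigr)v^2,
\]
where $e=|E(G)|$, and $Q_n=u^{r(V)}v^{n-r(V)}$ is a single monomial. Suppose $Q=AB$ with both factors nonconstant. Then:
\begin{itemize}
\item the top components multiply, so $A_{\rm top}$ and $B_{\rm top}$ are monomials $u^{a_1}v^{b_1}$ and $u^{a_2}v^{b_2}$ (up to scalars), with $a_1+a_2=r(V)$ and $\deg A+\deg B=n$;
\item the constant terms multiply to $1$, so we may normalise $A_0=B_0=1$;
\item comparing degree-one parts gives $A_1=\alpha v$ and $B_1=(n-\alpha)v$.
\end{itemize}

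\textbf{Newton polygon.} $N(Q)=N(A)+N(B)$, and each face of $N(Q)$ splits as a sum of faces of $N(A)$ and $N(B)$; the restrictions of $Q$ to those faces factor accordingly. Two faces will be used:
\begin{itemize}
\item the face on the line $u$-exponent $=$ total degree, whose restriction is $Q(u,0)=\sum_{S\ \text{nonsingular}}u^{|S|}$;
\item the face of minimal $u$-exponent, whose restriction is $Q(0,v)=I(G;v)$, the independence polynomial.
\end{itemize}
The goal is to show that the lattice points on the upper-left boundary of $N(Q)$ are realised by a nested chain of subsets $\emptyset=S_0\subset S_1\subset\cdots\subset S_k=V$. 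Connectivity is what should produce this: from any induced subgraph one can add a vertex adjacent to it and raise the rank by $0$ or $2$. The intended consequence is that the edge of $N(Q)$ through $(0,1)$, coming from the $n$ singletons, and the edge toward $(r(V),n-r(V))$ cannot both split into two nontrivial Minkowski summands. That would force one of $A,B$ to have a Newton polygon that is a single point, hence to be constant because its constant term is $1$.

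\textbf{Main obstacle.} The hardest step is turning connectivity into this rigidity of the boundary. In the Tutte case the corresponding input is that the coefficients $b_{1,0}$ and $b_{0,1}$ are nonzero for connected matroids, which kills any splitting. Here the analogue I would try first is the following: for connected loopless $G$, the coefficient of $u^2v^{m-2}$ in $Q_m$ is positive for every $2\le m\le n$, because every connected induced subgraph containing an edge has rank at least $2$, and by growing a connected set we get rank exactly $2$ plus isolated-type vertices at each size. This puts a whole segment of lattice points on a line of $N(Q)$. I would then compare the $u^2$-coefficients of $Q_2$ and $Q_3$ with those coming from $A_1B_1$, $A_2$ and $B_2$, and show that nonconstant $A$ and $B$ force $e=0$, contradicting $n\ge2$ and connectivity; the case $n=1$ is trivial. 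If this low-degree comparison does not close the argument, the fallback is induction on $n$: delete a non-cut vertex $w$ and use $Q(G)=Q(G-w)+(\text{terms containing }w)$ to transport a factorisation of $Q(G)$ to one of $Q(G-w)$.
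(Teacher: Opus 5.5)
Your proposal does not prove the hard direction, and the route you sketch cannot be completed as stated.

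\textbf{The Newton-polygon strategy fails.} It relies on the support of $Q$, and the support does not detect irreducibility. For the path $P_3$ you get $Q=1+3v+v^2+2u^2+u^2v$. Its Newton polygon $\mathrm{conv}\{(0,0),(2,0),(2,1),(0,2)\}$ equals $\mathrm{conv}\{(0,0),(2,0),(0,1)\}+\mathrm{conv}\{(0,0),(0,1)\}$, although $Q$ is irreducible. The same example breaks your inference ``the two chosen edges cannot both split, hence one summand is a point''. A Minkowski decomposition may hand whole edges to different summands. Here both edges at $(r(V),n-r(V))=(2,1)$ are primitive, so they cannot split, yet $N(Q)$ still decomposes.

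\textbf{Your proposed key input is false.} For $K_4$ the only $4$-subset is $V$. Its adjacency matrix $J+I$ satisfies $(J+I)^2=I$ over $\mathbb{F}_2$, so $Q_4=u^4$ has no $u^2v^2$ term. In dense graphs there are no ``isolated-type'' vertices to add.

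\textbf{Smaller issues.} First, $A_1+B_1=nv$ does not give $A_1=\alpha v$, since a term $\beta u$ in $A_1$ can cancel against $-\beta u$ in $B_1$. Second, comparing $Q_2$ and $Q_3$ leaves $A_2,B_2,\dots$ essentially unconstrained and cannot force $e=0$. Third, nothing in the proposal turns a factorisation of $Q(G)$ into one of $Q(G-w)$.

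\textbf{The missing idea} is to combine the diagonal with a single point away from your normalisation point. You already showed that the top components of $A$ and $B$ are monomials. Hence $A(t,t)$ and $B(t,t)$ have degrees exactly $\deg A\ge 1$ and $\deg B\ge1$. Since $A(t,t)B(t,t)=Q(t,t)=\sum_S t^{|S|}=(1+t)^n$, both factors must be scalar multiples of positive powers of $1+t$. So both $A$ and $B$ vanish at $(u,v)=(-1,-1)$, which is $(x,y)=(0,0)$, and $q(G;x,y)$ would vanish to order at least $2$ at the origin.

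This contradicts the order-one vanishing, which uses the evenness of ranks you noted but never used. It gives $q(G;0,y)=q_N(G;y)$. By Arratia--Bollob\'as--Sorkin, for connected $G$ this polynomial vanishes at $0$ with nonzero linear coefficient, so $\operatorname{ord}_{(0,0)}q(G;x,y)=1$. This is the paper's argument. The decisive point is $(-1,-1)$ in your coordinates, not the origin where you normalised $A_0=B_0=1$.
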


The proof of Theorem~\ref{thm:two-var} is quite different from the proof for the Tutte
polynomial. We use only two elementary properties of $q(G;x,y)$: the specialization
$q(G;t,t)=t^{|V(G)|}$, and the order of vanishing of $q(G;x,y)$ at $(0,0)$. These two facts
force any nontrivial factorisation to have too large an order of vanishing at the origin.

The loopless hypothesis in Theorem~\ref{thm:two-var} is essential. We give an infinite
family of connected looped graphs for which the two-variable interlace polynomial is reducible.

The correct irreducibility statement for looped graphs is obtained by keeping the vertex
variables separate. Courcelle's multivariate interlace polynomial
$C_G(u,v;\mathbf{x},\mathbf{y})$ (defined in Section~4) assigns a separate pair of
variables to each vertex, recording whether it is chosen and whether its loop is toggled.
Our second main result is as follows.

\begin{theorem}\label{thm:multi-var}
	Let $G$ be a graph with at least one vertex and loops allowed. Then $C_G(u,v;\mathbf{x},\mathbf{y})$ is irreducible in
	$\mathbb{C}[u,v,x_a,y_a:a\in V(G)]$ if and only if $G$ is connected.
\end{theorem}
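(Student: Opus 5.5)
The plan is to use the multilinear structure of $C_G$ in the vertex variables. Concretely, it should force any factorisation to split the vertex set, after which a single edge across the split gives a contradiction. I use Courcelle's definition
\[
C_G(u,v;\mathbf{x},\mathbf{y})=\sum_{\substack{A,B\subseteq V(G)\\ A\cap B=\emptyset}} x_A\,y_B\,u^{\operatorname{rk}((G\nabla B)[A\cup B])}\,v^{\operatorname{n}((G\nabla B)[A\cup B])}.
\]
Here $x_A=\prod_{a\in A}x_a$, $G\nabla B$ toggles the loops at the vertices of $B$, and rank and nullity are taken over $\mathbb{F}_2$.

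\emph{Necessity.} If $G=G_1\sqcup G_2$ with both parts nonempty, the induced subgraphs split, and rank and nullity are additive over the two blocks of the adjacency matrix. Hence $C_G=C_{G_1}C_{G_2}$. Each factor is nonconstant, because the term $A=\{a\}$, $B=\emptyset$ contributes $x_a u$ or $x_a v$. So $C_G$ is reducible.

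\emph{Sufficiency.} Let $G$ be connected and suppose $C_G=PQ$.

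First, the variables split between the factors. For each vertex $a$, every monomial of $C_G$ has total degree at most $1$ in $\{x_a,y_a\}$, since $A\cap B=\emptyset$. Both $x_a$ and $y_a$ actually occur, via the terms with $A=\{a\}$ or $B=\{a\}$. Consequently $x_a$ occurs in exactly one of $P,Q$, and likewise $y_a$. They must occur in the same factor: if $P=P_1x_a+P_0$ and $Q=Q_1y_a+Q_0$ with $P_1,Q_1\neq0$, then $PQ$ contains the nonzero part $P_1Q_1x_ay_a$, which is impossible. This gives a partition $V=V_1\sqcup V_2$ with $P\in\mathbb{C}[u,v,x_a,y_a:a\in V_1]$ and $Q\in\mathbb{C}[u,v,x_b,y_b:b\in V_2]$.

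Second, neither factor lives purely in $u,v$. View $C_G$ as a polynomial in the vertex variables with coefficients in $\mathbb{C}[u,v]$. Its constant coefficient, from $A=B=\emptyset$, is $1$. So any factor lying in $\mathbb{C}[u,v]$ divides $1$ and is a unit. Hence, if the factorisation is nontrivial, both $V_1$ and $V_2$ are nonempty.

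Third, compare coefficients. Write $P_\emptyset,Q_\emptyset\in\mathbb{C}[u,v]$ for the parts of $P,Q$ free of vertex variables; then $P_\emptyset Q_\emptyset=1$, so both are nonzero constants. For $a\in V_1$ and $b\in V_2$, let $P_a$ be the coefficient of $x_a$ in $P$ (with all other vertex variables set to $0$), and define $Q_b$ similarly. Then
\[
[x_a]C_G=P_aQ_\emptyset,\qquad [x_b]C_G=P_\emptyset Q_b,\qquad [x_ax_b]C_G=P_aQ_b .
\]
Therefore $[x_ax_b]C_G=[x_a]C_G\cdot[x_b]C_G$. Now choose an edge $ab$ with $a\in V_1$ and $b\in V_2$, which exists because $G$ is connected.

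Fourth, the main check is that this identity fails for an edge. For a single vertex, $[x_a]C_G$ equals $u$ if $a$ has a loop and $v$ otherwise. The pair $\{a,b\}$ has matrix $\begin{pmatrix}\ell_a&1\\1&\ell_b\end{pmatrix}$ over $\mathbb{F}_2$, with determinant $\ell_a\ell_b+1$.
\begin{itemize}
\item If both vertices are looped, the rank is $1$, so $[x_ax_b]C_G=uv$, whereas the product of singletons is $u^2$.
\item Otherwise the rank is $2$, so $[x_ax_b]C_G=u^2$, whereas the product of singletons is $uv$ or $v^2$.
\end{itemize}
Either way we get a contradiction, so $C_G$ is irreducible. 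The only delicate point is the variable-splitting step, which relies on the disjointness $A\cap B=\emptyset$.
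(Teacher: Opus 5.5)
Your proof is correct and follows the paper's argument: multiaffinity splits the vertex variables between the factors, the condition $A\cap B=\varnothing$ forces $x_a,y_a$ into the same factor, and the coefficient of $x_ax_b$ across a crossing edge gives the same four-case contradiction. The one difference is that you work directly over $\mathbb{C}[u,v]$. There the constant term $1$ makes $P_\emptyset,Q_\emptyset$ units and rules out factors lying in $\mathbb{C}[u,v]$, which replaces the paper's use of Gauss's lemma and its normalisation $F(0)=H(0)=1$.
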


Thus the multivariate interlace polynomial detects the correct decomposition of a looped graph: the
only possible factorisation comes from the connected components of $G$.

The paper is organized as follows. In Section~2 we prove
Theorem~\ref{thm:two-var}. In Section~3 we show that the loopless hypothesis cannot be
removed for Theorem~\ref{thm:two-var}. In Section~4 we prove Theorem \ref{thm:multi-var}. 


\section{The irreducibility of two-variable interlace polynomials}

All graphs considered in this paper are finite and have no multiple edges; loops are allowed unless otherwise stated.

For a graph $G=(V(G),E(G))$, the \emph{adjacency matrix} $\mathcal{A}(G)=(a_{ij})$ over $\mathbb{F}_2$ is the symmetric matrix whose diagonal entry $a_{ii}$ is $1$ if and only if vertex $i$ is looped, and whose off-diagonal entry $a_{ij}$ is $1$ if and only if $i$ and $j$ are joined by an edge. 

\begin{definition}\emph{\cite{ABS-two}}
	The two-variable interlace polynomial is
	\[
	q(G;x,y)=\sum_{S\subseteq V(G)}
	(x-1)^{r(\mathcal{A}(G[S]))}(y-1)^{n(\mathcal{A}(G[S]))},
	\]
	where $r(\mathcal{A}(G[S]))$ and $n(\mathcal{A}(G[S]))$ denote, respectively, the rank and nullity over
	$\mathbb{F}_2$ of the adjacency matrix of the induced subgraph $G[S]$. 
\end{definition}

For short, we write $r(\mathcal{A}(G[S]))$ and $n(\mathcal{A}(G[S]))$ as $r(G[S])$ and $n(G[S])$ respectively. 

It is
multiplicative under disjoint unions~\cite{ABS-two}:
\[
q(G_1\sqcup G_2;x,y)=q(G_1;x,y)q(G_2;x,y).
\]

For a nonzero polynomial $f(x,y)\in\mathbb{C}[x,y]$, write
\[
\operatorname{ord}_{(0,0)}f
=\min\{\,i+j : \text{the coefficient of }x^iy^j\text{ in }f\text{ is nonzero}\,\},
\]
the order of vanishing of $f$ at the origin. Equivalently, $\operatorname{ord}_{(0,0)}f$ is
the largest integer $k$ such that every monomial appearing in $f$ has total degree at
least~$k$.

For a graph $G$, let $c(G)$ denote the number of connected components of $G$.

The \emph{vertex-nullity interlace polynomial} \(q_N(G;y)\), introduced by Arratia, Bollob\'as, and Sorkin in~\cite{ABS-one}, 
is defined  by
\[
q_N(G;y)=\sum_{S\subseteq V(G)}(y-1)^{n(G[S])}.
\]
If \(G\) is loopless, then \(\mathcal{A}(G[S])\) is alternating over
\(\mathbb{F}_2\) for every \(S\subseteq V(G)\), and hence \(r(G[S])\) is even.
Therefore,
\[
q(G;0,y)
=\sum_{S\subseteq V(G)}
(-1)^{r(G[S])}(y-1)^{n(G[S])}
=q_N(G;y).
\]
Combining this identity with Remark~20 of~\cite{ABS-one}, the following lemma is clear.

\begin{lemma}\emph{\cite{ABS-one}}\label{lem:ord}
	If $G$ is a nonempty loopless graph, then
	\[
	\operatorname{ord}_{(0,0)} q(G;x,y)=c(G).
	\]
	In particular, if $G$ is
	connected, then $\operatorname{ord}_{(0,0)} q(G;x,y)=1$.
\end{lemma}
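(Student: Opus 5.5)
The plan is to reduce to the connected case using multiplicativity under disjoint unions, and then to read off the order at the origin from the restriction $x=0$, which for loopless graphs is the vertex-nullity polynomial $q_N(G;y)$.

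\emph{Additivity of the order.} For nonzero $f,g\in\mathbb{C}[x,y]$, write $f_{\min}$ and $g_{\min}$ for their lowest-degree homogeneous parts. Then the lowest homogeneous part of $fg$ is $f_{\min}g_{\min}$, which is nonzero because $\mathbb{C}[x,y]$ is a domain. Hence
\[
\operatorname{ord}_{(0,0)}(fg)=\operatorname{ord}_{(0,0)}f+\operatorname{ord}_{(0,0)}g .
\]
Write $G=G_1\sqcup\dots\sqcup G_{c(G)}$ as the disjoint union of its connected components. Each $G_i$ is nonempty and loopless. Multiplicativity of $q$ then gives
\[
\operatorname{ord}_{(0,0)}q(G;x,y)=\sum_{i=1}^{c(G)}\operatorname{ord}_{(0,0)}q(G_i;x,y).
\]
So it suffices to prove that $\operatorname{ord}_{(0,0)}q(H;x,y)=1$ for every nonempty connected loopless graph $H$.

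\emph{The connected case.} The key tool is the identity $q(H;0,y)=q_N(H;y)$ established above, which uses the fact that the ranks $r(H[S])$ are all even for loopless $H$. I will use Remark~20 of~\cite{ABS-one}: for a loopless graph, $y^{c(H)}$ is the lowest power of $y$ occurring in $q_N(H;y)$ with nonzero coefficient. For connected $H$ this gives two facts.
\begin{itemize}
\item The constant term satisfies $q(H;0,0)=q_N(H;0)=0$, so $\operatorname{ord}_{(0,0)}q(H;x,y)\ge 1$.
\item The coefficient of $y^1$ in $q(H;0,y)$ is nonzero. Setting $x=0$ only removes monomials containing $x$, so this coefficient equals the coefficient of the monomial $x^0y^1$ in $q(H;x,y)$. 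Thus a monomial of total degree $1$ survives, and $\operatorname{ord}_{(0,0)}q(H;x,y)\le 1$.
\end{itemize}
Together these give order exactly $1$. Summing over the components then yields $\operatorname{ord}_{(0,0)}q(G;x,y)=c(G)$.

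\emph{Main obstacle.} The only nontrivial input is the statement about $q_N$, namely that $q_N(H;0)=0$ and that the linear coefficient is nonzero for connected loopless $H$. I intend to cite this from~\cite{ABS-one}. If a self-contained argument were needed, I would argue by induction on the number of vertices and edges, using the pivot recursion
\[
q_N(H)=q_N(H-a)+q_N\bigl(H^{ab}-b\bigr)
\]
along an edge $ab$, together with $q_N(\text{isolated vertex};y)=y+1-1=y$. The delicate point in that induction is tracking the sign of the linear coefficient, to show that it cannot cancel to zero.
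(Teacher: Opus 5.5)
Your proposal is correct and follows the paper's route. The paper simply combines the identity $q(G;0,y)=q_N(G;y)$ with Remark~20 of \cite{ABS-one} and declares the lemma clear; your argument supplies what that leaves implicit. Setting $x=0$ only controls the $x$-free monomials, so the lower bound needs multiplicativity over components, additivity of $\operatorname{ord}_{(0,0)}$, and the vanishing constant term of each factor.

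Your fallback induction is not needed, and its worry about cancellation is unfounded: the pivot recursion only adds polynomials with nonnegative coefficients. Taking $a$ to be a non-cut vertex makes $H-a$ connected, which immediately gives a positive linear coefficient.
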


\begin{lemma}\label{lem:maximal-ideal}
	Let $K$ be a field and let $f\in K[x,y]$. Suppose that
	\[
	f(t,t)=t^N
	\]
	for some $N\geq 1$, that the highest homogeneous component of $f$ is a
	monomial, and that $\operatorname{ord}_{(0,0)}f=1$. Then $f$ is
	irreducible in $K[x,y]$.
\end{lemma}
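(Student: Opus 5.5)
Suppose $f=gh$ with $g,h\in K[x,y]$ both non-constant; we derive a contradiction. The plan uses the three hypotheses in turn. The specialization pins down $g(t,t)$ and $h(t,t)$. The monomial leading form rules out a factor that is constant along the diagonal. The order of vanishing at the origin then forces a contradiction.

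\textbf{Step 1 (restrict to the diagonal).} Substituting $x=y=t$ gives
\[
g(t,t)\,h(t,t)=t^N .
\]
Since $K[t]$ is a UFD, there are constants $\alpha,\beta\in K^\times$ with $\alpha\beta=1$ such that
\[
g(t,t)=\alpha t^a,\qquad h(t,t)=\beta t^b,\qquad a+b=N .
\]

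\textbf{Step 2 (both exponents are positive).} Write $g_{\mathrm{top}}$ and $h_{\mathrm{top}}$ for the highest homogeneous components. Then $f_{\mathrm{top}}=g_{\mathrm{top}}h_{\mathrm{top}}$, because $K[x,y]$ is a domain. By hypothesis this product is a monomial $c\,x^iy^j$ with $c\neq0$. Since $K[x,y]$ is a UFD, each factor is itself a monomial: $g_{\mathrm{top}}=\gamma x^{i_1}y^{j_1}$ with $\gamma\neq0$.

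Let $d=\deg g\geq1$. The coefficient of $t^d$ in $g(t,t)$ is $g_{\mathrm{top}}(1,1)=\gamma\neq0$. Lower-degree parts contribute only lower powers of $t$. Hence $g(t,t)$ has degree exactly $d$, so $a=d\geq1$. In the same way $b=\deg h\geq1$.

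This is the step where the monomial hypothesis is essential. Without it, a factor such as $x-y+1$ restricts to the constant $1$ on the diagonal, and the argument would break down.

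\textbf{Step 3 (order of vanishing).} Since $a\geq1$, we have $g(0,0)=g(t,t)|_{t=0}=0$, so $\operatorname{ord}_{(0,0)}g\geq1$. Likewise $\operatorname{ord}_{(0,0)}h\geq1$.

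The order of vanishing is additive: the lowest homogeneous components multiply to a nonzero form, again because $K[x,y]$ is a domain. Therefore
\[
\operatorname{ord}_{(0,0)}f=\operatorname{ord}_{(0,0)}g+\operatorname{ord}_{(0,0)}h\geq 2,
\]
which contradicts the hypothesis $\operatorname{ord}_{(0,0)}f=1$.

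It remains to check that $f$ is not a unit, so that irreducibility is meaningful. This holds because $f(t,t)=t^N$ with $N\geq1$ is non-constant, so $f$ itself is non-constant. Hence $f$ is irreducible in $K[x,y]$.

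The only delicate point is Step 2, namely showing that neither factor restricts to a nonzero constant on the diagonal. The degree-preservation argument above handles it, using that a monomial does not vanish at $(1,1)$.
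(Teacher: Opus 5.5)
Your proposal is correct and follows essentially the same route as the paper. The monomial top components force $g(t,t)$ and $h(t,t)$ to have degrees $\deg g,\deg h\ge 1$, so both factors vanish at the origin and $\operatorname{ord}_{(0,0)}f\ge 2$. The only differences are cosmetic: you invoke full additivity of the order where the paper uses just the inequality, and you explicitly check that $f$ is nonconstant.
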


\begin{proof}
	Suppose that $f=FH$ for nonconstant polynomials $F,H\in K[x,y]$, and let
	$F_d$ and $H_e$ be their highest homogeneous components. Then $d,e\geq 1$,
	and $F_dH_e$ is the highest homogeneous component of $f$.
	
	By assumption, $F_dH_e$ is a monomial, say $F_dH_e=cx^ry^s$ with
	$c\in K\setminus\{0\}$. Since $K[x,y]$ is a unique factorisation domain,
	every irreducible factor of $F_d$ and $H_e$ must be an irreducible factor
	of $cx^ry^s$. Up to multiplication by nonzero constants, the only such
	factors are $x$ and $y$. Hence
	\[
	F_d=\alpha x^ay^b
	\qquad\text{and}\qquad
	H_e=\beta x^{a'}y^{b'}
	\]
	for some nonzero $\alpha,\beta\in K$. Thus $F_d$ and $H_e$ are monomials.
	
	It follows that $F_d(t,t)$ and $H_e(t,t)$ are nonzero. Therefore
	$F(t,t)$ and $H(t,t)$ are nonzero polynomials of degrees $d$ and $e$,
	respectively. Since
	\[
	F(t,t)H(t,t)=t^N,
	\]
	unique factorisation in $K[t]$ gives
	$F(t,t)=\gamma t^i$ and $H(t,t)=\gamma^{-1}t^j$ for some
	$\gamma\in K\setminus\{0\}$ and $i,j\geq 1$. Hence
	$F(0,0)=H(0,0)=0$.
	
	Thus every monomial in both $F$ and $H$ has positive total degree, so every
	monomial in $FH$ has total degree at least $2$. Consequently,
	$\operatorname{ord}_{(0,0)}f\geq 2$, contradicting
	$\operatorname{ord}_{(0,0)}f=1$. Therefore $f$ is irreducible in $K[x,y]$.
\end{proof}

We now prove Theorem \ref{thm:two-var}.
\begin{proof}[Proof of Theorem~\ref{thm:two-var}]
	If $G$ is disconnected, write $G=G_1\sqcup G_2$ with $G_1$ and $G_2$ nonempty. By
	multiplicativity,
	\[
	q(G;x,y)=q(G_1;x,y)q(G_2;x,y),
	\]
	and both factors are nonconstant. Hence $q(G;x,y)$ is reducible.
	
	Conversely, assume that $G$ is connected. Let $n=|V(G)|$. Setting $x=y=t$ gives
	\[
	q(G;t,t)
	=\sum_{S\subseteq V(G)}(t-1)^{r(G[S])+n(G[S])}
	=\sum_{S\subseteq V(G)}(t-1)^{|S|}
	=t^{\,n}.
	\]
	The summand indexed by $S$ has total degree $|S|$ and highest homogeneous
	component $x^{r(G[S])}y^{n(G[S])}$. Consequently, the highest homogeneous
	component of $q(G;x,y)$ comes only from $S=V(G)$ and is the monomial
	$x^{r(G)}y^{n(G)}$.
	
	Since $G$ is nonempty, $n\geq1$, and by Lemma~\ref{lem:ord},
	$\operatorname{ord}_{(0,0)}q(G;x,y)=1$. Thus all the hypotheses of
	Lemma~\ref{lem:maximal-ideal} hold with $N=n$, and $q(G;x,y)$ is irreducible
	over $\mathbb{C}[x,y]$.
\end{proof}

\begin{remark}\label{rem:characteristic-zero}
	Lemma~\ref{lem:maximal-ideal} holds over any field. However, to apply
	Lemma~\ref{lem:ord}, we require $\operatorname{char}K=0$, since then the
	map $\mathbb{Z}\to K$ is injective and no nonzero coefficient of
	$q(G;x,y)$ vanishes. Hence $\operatorname{ord}_{(0,0)}q(G;x,y)$ is
	unchanged over $K$. In positive characteristic, coefficients of minimum
	total degree may vanish. Therefore, Theorem~\ref{thm:two-var} holds over
	every field of characteristic zero.
\end{remark}

\section{The loopless hypothesis is necessary}

A vertex with a loop attached is called a \emph{looped vertex}; otherwise it is
\emph{unlooped}. A \emph{looped graph} is a graph with at least one looped vertex. An \emph{ordinary neighbour} of a vertex $v$ is a vertex
$u\neq v$ such that $u$ and $v$ are adjacent. A looped vertex is not an ordinary neighbour of itself. 

Theorem~\ref{thm:two-var} shows that for loopless graphs, connectedness is equivalent to
irreducibility of $q(G;x,y)$. In this section we show that this fails for looped graphs: we
construct an infinite family of connected looped graphs whose two-variable interlace
polynomials are reducible.

To do this, we first establish the following lemma. 

\begin{lemma}\label{lem:false-twin}
	Let $G$ be a looped graph, and let $v\in V(G)$ be a looped vertex. Let $G'$ be obtained
	from $G$ by adding a new looped vertex $v'$ that is not adjacent to $v$ and has the same
	ordinary neighbours as $v$. Then
	\[
	q(G';0,y)=2q(G;0,y).
	\]
\end{lemma}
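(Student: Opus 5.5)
The plan is a direct combinatorial computation from the state-sum expression. Since $x=0$,
\[
q(G;0,y)=\sum_{S\subseteq V(G)}(-1)^{r(G[S])}(y-1)^{n(G[S])},
\]
and similarly for $G'$. Write $\tau_H(S)=(-1)^{r(H[S])}(y-1)^{n(H[S])}$ for the summand. I will split the subsets $T\subseteq V(G')=V(G)\cup\{v'\}$ into three classes: (i) $v'\notin T$; (ii) $v'\in T$, $v\notin T$; (iii) $\{v,v'\}\subseteq T$. The goal is to show that class (i) contributes $q(G;0,y)$ and that classes (ii) and (iii) together also contribute $q(G;0,y)$.

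Class (i) is immediate. In this case $G'[T]=G[T]$, so these terms sum to exactly $q(G;0,y)$.

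For class (ii), write $T=S'\cup\{v'\}$ with $S'\subseteq V(G)\setminus\{v\}$. The map fixing $S'$ and sending $v'\mapsto v$ is an isomorphism $G'[T]\cong G[S'\cup\{v\}]$. This holds because $v'$ is looped and has the same ordinary neighbours as $v$. Hence class (ii) contributes $\sum_{S\subseteq V(G),\,v\in S}\tau_G(S)$.

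For class (iii), write $T=S\cup\{v,v'\}$ with $S\subseteq V(G)\setminus\{v\}$. I will show that $r(G'[T])=r(G[S])+2$ and $n(G'[T])=n(G[S])$. In $\mathcal{A}(G'[T])$, the rows and columns indexed by $v$ and $v'$ have the following form:
\begin{itemize}
\item the $2\times2$ diagonal block on $\{v,v'\}$ is the identity, since both vertices are looped and $v\not\sim v'$;
\item both rows have the same off-block part $b$, namely the indicator vector of the common ordinary neighbours in $S$.
\end{itemize}
I then apply symmetric (congruence) operations over $\mathbb{F}_2$, which preserve rank.
\begin{enumerate}
\item Add row $v$ to row $v'$, and column $v$ to column $v'$. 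The block becomes $\begin{pmatrix}1&1\\1&0\end{pmatrix}$, and row and column $v'$ vanish outside the block.
\item For each $w\in S$ with $b_w=1$, add row $v'$ to row $w$ and column $v'$ to column $w$. This clears the $b$-entries in column and row $v$, and leaves the block and $\mathcal{A}(G[S])$ unchanged.
\end{enumerate}
The result is $\begin{pmatrix}1&1\\1&0\end{pmatrix}\oplus\mathcal{A}(G[S])$. Its rank is $r(G[S])+2$, and since its size is $|S|+2$, its nullity is $n(G[S])$. The sign $(-1)^{r}$ is unchanged because the rank shifts by an even amount. So class (iii) contributes $\sum_{S\subseteq V(G),\,v\notin S}\tau_G(S)$.

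Adding the three contributions gives $q(G;0,y)+\sum_{v\in S}\tau_G(S)+\sum_{v\notin S}\tau_G(S)=2q(G;0,y)$.

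The main point needing care is the congruence reduction in class (iii). It relies precisely on the hypotheses that both $v$ and $v'$ are looped and non-adjacent, and that they share their ordinary neighbours. One must also track the entry $(v',v')$, which becomes $1+1=0$ over $\mathbb{F}_2$, and check that step 2 does not alter the block. Everything else is bookkeeping of the bijections.
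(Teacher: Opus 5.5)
Your proof is correct and follows essentially the paper's argument. The paper also splits the subsets of $V(G')$ by membership of $v$ and $v'$, identifies $\mathcal{A}(G'[S\cup\{v'\}])$ with $\mathcal{A}(G[S\cup\{v\}])$, and uses a congruence over $\mathbb{F}_2$ to show that the $\{v,v'\}$ case has rank larger by $2$ and the same nullity. The only differences are cosmetic: the paper reduces to $I_2\oplus\mathcal{A}(G[S])$ where you reduce to $\begin{pmatrix}1&1\\1&0\end{pmatrix}\oplus\mathcal{A}(G[S])$, and it groups the sum into four terms per $S\subseteq V(G)\setminus\{v\}$ rather than your three classes.
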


\begin{proof}
	Set $t=y-1$, and for a symmetric matrix $N$ over $\mathbb{F}_2$, define
	\begin{align*}
		w(N)=(-1)^{r(N)}t^{n(N)}.
	\end{align*}
	Then
	\begin{align*}
		&q(G';0,t+1)\\
		=&\sum_{S\subseteq V(G')}w(\mathcal{A}(G'[S]))\\
		=&\sum_{S\subseteq V(G)}(w(\mathcal{A}(G'[S]))+w(\mathcal{A}(G'[S\cup \{v'\}])))\\
		=&\sum_{S\subseteq V(G)\setminus \{v\}} \left(w(\mathcal{A}(G'[S]))+w(\mathcal{A}(G'[S\cup \{v\}]))+w(\mathcal{A}(G'[S\cup \{v'\}]))+w(\mathcal{A}(G'[S\cup \{v,v'\}]))\right).
	\end{align*}
	
	For any $S\subseteq V(G)\setminus \{v\}$, 
	let $B=\mathcal{A}(G[S])$, and let $c$ be the column vector
	recording the adjacency of $v$ to the vertices of $S$ in $G$. With $v$ listed first in $\mathcal{A}(G[S\cup\{v\}])$, we have
	\[
	\mathcal{A}(G'[S\cup\{v\}])=\mathcal{A}(G[S\cup\{v\}])=
	\begin{pmatrix}
		1 & c^T\\
		c & B
	\end{pmatrix}:=M_1.
	\]
	Since $v$ and $v'$ have the same neighbours in $G'$, we have 
	\[
	\mathcal{A}(G'[S\cup\{v'\}])=
	\begin{pmatrix}
		1 & c^T\\
		c & B
	\end{pmatrix}=M_1,
	\]
	where $v'$ is listed first in $\mathcal{A}(G'[S\cup\{v'\}])$. Clearly, 
	\[
	\mathcal{A}(G'[S\cup\{v,v'\}])=
	\begin{pmatrix}
		1 & 0 & c^T\\
		0 & 1 & c^T\\
		c & c & B
	\end{pmatrix}:=M_2,
	\]
	where $v$ and $v'$ are listed first in $\mathcal{A}(G'[S\cup\{v,v'\}])$.
	Note that 
	\[
	\begin{pmatrix}
		I_2 & \mathbf{0}\\
		D & I_s
	\end{pmatrix}
	M_2\begin{pmatrix}
		I_2 & D^T\\
		\mathbf{0} & I_s
	\end{pmatrix}=\begin{pmatrix}
		I_2 & \mathbf{0}\\
		\mathbf{0} & B+DD^T
	\end{pmatrix},
	\]
	where $I_l$ denotes the identity matrix of order $l$, $D=\begin{pmatrix}c&c\end{pmatrix}$ and $s=|S|$.  
	Since $DD^T=cc^T+cc^T=0$ over $\mathbb{F}_2$, we have
	$r(M_2)=2+r(B)$ and $n(M_2)=n(B)$. Therefore
	\[
	w(M_2)=(-1)^{r(M_2)}t^{n(M_2)}
	=(-1)^{2+r(B)}t^{n(B)}
	=w(B).
	\]
	Then  
	\begin{align*}
		q(G';0,t+1)=&2\sum_{S\subseteq V(G)\setminus \{v\}} \left(w(B)+w(M_1)\right)\\
		=&2\sum_{S\subseteq V(G)\setminus \{v\}} \left(w(\mathcal{A}(G[S]))+w(\mathcal{A}(G[S\cup \{v\}]))\right)\\
		=&2\sum_{S\subseteq V(G)} w(\mathcal{A}(G[S]))\\
		=&2q(G;0,t+1).
	\end{align*}
	This completes the proof. 
\end{proof}

Let \(a,b\geq 1\) be integers. Let $X=\{x_1,\ldots,x_a\}$, $Y=\{y_1,\ldots,y_b\}$ and $\{u,v,w\}$ be three disjoint sets. Define the graph \(G_{a,b}\) to be the looped graph with vertex set
\[
V(G_{a,b})=Y\sqcup X\sqcup \{u,v,w\},
\]
and every vertex in \(X\cup Y\) is looped. The set of ordinary edges of
\(G_{a,b}\) is
\[
\begin{aligned}
	\{x_i y_j : 1\leq i\leq a,\ 1\leq j\leq b\} \cup \{x_iw : 1\leq i\leq a\} \cup \{y_ju : 1\leq j\leq b\} \cup \{uv,vw\}.
\end{aligned}
\]

In particular, \(G_{1,1}\) is obtained from a \(5\)-cycle by adding a loop at each of two adjacent vertices. The following conclusion is clear by a direct computation from the definition.

\begin{proposition}\label{ex:base}
	\begin{align*}
		q(G_{1,1};x,y)=x(
		x^3y+2x^3-2x^2y-4x^2+3xy+x+y^2-y).
	\end{align*}
	Further, $q(G_{1,1};x,y)$ is reducible although $G_{1,1}$ is connected.
\end{proposition}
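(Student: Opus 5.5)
The proof splits into computing the polynomial and then reading off reducibility. Reducibility is the easy half. Once the displayed formula is established, $q(G_{1,1};x,y)=x\cdot g(x,y)$ with
\[
g(x,y)=x^3y+2x^3-2x^2y-4x^2+3xy+x+y^2-y .
\]
The factor $x$ is nonconstant, and $g$ is nonconstant of degree $4$, so neither factor is a unit and $q(G_{1,1};x,y)$ is reducible in $\mathbb{C}[x,y]$. Connectedness of $G_{1,1}$ is immediate: its ordinary edges $x_1y_1,x_1w,y_1u,uv,vw$ form the $5$-cycle $x_1y_1uvwx_1$. So the whole content is verifying the formula.

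For the formula, I will label the vertices so that the cycle is $y_1,x_1,w,v,u$, with loops at $y_1$ and $x_1$. I will then compute
\[
q(G_{1,1};x,y)=\sum_{S\subseteq V}(x-1)^{r(G[S])}(y-1)^{n(G[S])}
\]
by tabulating, for each of the $32$ subsets $S$, the $\mathbb{F}_2$-rank of $\mathcal{A}(G[S])$; the nullity is $|S|-r$.

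To keep this manageable I will group subsets by isomorphism type of $G[S]$. Every proper induced subgraph is a disjoint union of paths, possibly with loops at one or both of $x_1,y_1$. Ranks multiply additively over components, so I only need the ranks of short paths with loop patterns:
\begin{itemize}
\item an unlooped path $P_k$ has rank $k$ if $k$ is even and $k-1$ if $k$ is odd;
\item a single looped vertex has rank $1$;
\item a looped edge $x_1y_1$ gives the matrix $\begin{pmatrix}1&1\\1&1\end{pmatrix}$, of rank $1$;
\item the few remaining looped paths are handled by small row reductions.
\end{itemize}
The full set $S=V$ needs a direct $5\times5$ rank computation.

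After the tabulation I will collect the sum as a polynomial in $x-1$ and $y-1$, expand, and compare with $x\,g(x,y)$. The expected obstacle is purely bookkeeping, namely getting every rank right. I will guard against errors with the checks the paper already provides:
\begin{itemize}
\item $q(G;t,t)=t^5$, which forces the expansion to reduce to $t^5$ on the diagonal;
\item the top homogeneous component must be $x^{r(G)}y^{n(G)}$. In $x\,g$ it is $x^4y$, so I expect $r(G_{1,1})=4$ and nullity $1$, and the $5\times5$ computation should confirm this;
\item a spot evaluation such as $y=1$, which keeps only the nonsingular $S$ and gives $\sum_{S\ \mathrm{nonsingular}}(x-1)^{|S|}$.
\end{itemize}
If a discrepancy appears, I would recheck the looped-path ranks first, since those are where errors most likely enter.
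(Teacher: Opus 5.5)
Your proposal is correct and follows the paper's route: the paper also obtains the formula by direct computation from the definition, then reads off reducibility from the factor $x$ and connectedness from the $5$-cycle. Carrying out your tabulation with $\xi=x-1$ and $\eta=y-1$ gives $q(G_{1,1};x,y)=1+(2\xi+3\eta)+(4\xi^2+5\xi\eta+\eta^2)+(6\xi^3+3\xi^2\eta+\xi\eta^2)+(3\xi^4+2\xi^3\eta)+\xi^4\eta$, which passes all three of your checks and expands to the stated product.
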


\begin{proposition}\label{prop:looped-family}
	For every pair of integers $a,b\geq 1$, the graph $G_{a,b}$ is connected and
	\[
	x\mid q(G_{a,b};x,y).
	\]
	In particular, $q(G_{a,b};x,y)$ is reducible in $\mathbb{Z}[x,y]$.
\end{proposition}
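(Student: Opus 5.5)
Connectedness of $G_{a,b}$ is immediate: every $x_i$ is adjacent to every $y_j$, the $x_i$ are adjacent to $w$, the $y_j$ are adjacent to $u$, and $u$--$v$--$w$ is a path. So the substance is the divisibility $x\mid q(G_{a,b};x,y)$, i.e.\ $q(G_{a,b};0,y)=0$ identically in $y$.

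The plan is to reduce to the base case $G_{1,1}$ by repeated use of Lemma~\ref{lem:false-twin}. In $G_{a,b}$ the vertices $x_1,\dots,x_a$ are looped, pairwise non-adjacent, and all have the same ordinary neighbours $Y\cup\{w\}$. Hence $G_{a,b}$ arises from $G_{a-1,b}$ (for $a\ge 2$) by adding a looped vertex $x_a$ that is not adjacent to $x_1$ and has the same ordinary neighbours as $x_1$. This is exactly the situation of Lemma~\ref{lem:false-twin}, and $G_{a-1,b}$ is itself a looped graph. The lemma gives $q(G_{a,b};0,y)=2q(G_{a-1,b};0,y)$.

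The same argument applies to the $y_j$, whose common ordinary neighbours are $X\cup\{u\}$. Adding $y_b$ as a twin of $y_1$ in $G_{a,b-1}$ gives $q(G_{a,b};0,y)=2q(G_{a,b-1};0,y)$. Iterating,
\[
q(G_{a,b};0,y)=2^{(a-1)+(b-1)}\,q(G_{1,1};0,y).
\]
By Proposition~\ref{ex:base}, $q(G_{1,1};x,y)=x\cdot(\cdots)$, so $q(G_{1,1};0,y)=0$. Therefore $q(G_{a,b};0,y)=0$, and writing $q(G_{a,b};x,y)$ as a polynomial in $x$ over $\mathbb{Z}[y]$, its constant term in $x$ vanishes. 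So $x\mid q(G_{a,b};x,y)$ in $\mathbb{Z}[x,y]$.

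For reducibility it remains to check that the cofactor $q(G_{a,b};x,y)/x$ is nonconstant. As in the proof of Theorem~\ref{thm:two-var}, the highest homogeneous component of $q(G_{a,b};x,y)$ comes only from $S=V(G_{a,b})$. That component is $x^{r}y^{n}$, of total degree $|V(G_{a,b})|=a+b+3\ge 5>1$, so $q$ is not a constant multiple of $x$.

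The only point needing care is the verification of the twin hypotheses: the new vertex must be looped, non-adjacent to the chosen twin, and have identical ordinary neighbourhoods at each stage, and the graph it is added to must be looped. All of these hold directly from the definition of $G_{a,b}$. The real computational content is the base case, which Proposition~\ref{ex:base} supplies. If one preferred not to rely on it, one would compute $q(G_{1,1};0,y)=\sum_S(-1)^{r(G[S])}(y-1)^{n(G[S])}$ over the 32 subsets of the 5-cycle with two adjacent looped vertices and check the cancellation; I expect this direct check to be the most tedious step.
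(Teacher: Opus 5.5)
Your proposal is correct and follows the paper's proof essentially step for step: connectedness by inspection, then $a+b-2$ applications of Lemma~\ref{lem:false-twin} to get $q(G_{a,b};0,y)=2^{a+b-2}q(G_{1,1};0,y)=0$, hence $x\mid q(G_{a,b};x,y)$. The only cosmetic difference is how you show the cofactor is nonconstant. You use the top homogeneous component $x^{r}y^{n}$ of total degree $a+b+3$, whereas the paper uses $q(G_{a,b};t,t)=t^{a+b+3}$; both rest on $r(G[S])+n(G[S])=|S|$ and are equally valid.
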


\begin{proof}
	It is clear that $G_{a,b}$ is connected.
	By Proposition~\ref{ex:base}, $q(G_{1,1};0,y)=0$. 
	Starting from \(G_{1,1}\), we obtain \(G_{a,b}\) by successively adding the vertices
	\[
	x_2,\ldots,x_a,y_2,\ldots,y_b.
	\]
	At the stage when \(x_i\) is added, for each \(2\leq i\leq a\), the vertex \(x_i\) is looped, is not adjacent to \(x_1\), and has the same ordinary neighbours as \(x_1\). Similarly, at the stage when \(y_j\) is added, for each \(2\leq j\leq b\), the vertex \(y_j\) is looped, is not adjacent to \(y_1\), and has the same ordinary neighbours as \(y_1\). Applying Lemma ~\ref{lem:false-twin} successively \(a+b-2\) times yields
	\[
	q(G_{a,b};0,y)
	= 2^{a+b-2}q(G_{1,1};0,y)
	= 0.
	\]
	Regard $q(G_{a,b};x,y)$ as a polynomial in $x$ with coefficients in $\mathbb{Z}[y]$. Write
	\[
	q(G_{a,b};x,y)=f_0(y)+xf_1(y)+x^2f_2(y)+\cdots+x^df_d(y),
	\]
	where $f_i(y)\in\mathbb{Z}[y]$. Since $f_0(y)=q(G_{a,b};0,y)=0$, we have 
	\[
	q(G_{a,b};x,y)
	=x\bigl(f_1(y)+xf_2(y)+\cdots+x^{d-1}f_d(y)\bigr).
	\]
	Thus $x\mid q(G_{a,b};x,y)$. The quotient is nonconstant, since
	$q(G_{a,b};t,t)=t^{|V(G_{a,b})|}$ with $|V(G_{a,b})|=a+b+3\geq 5$. Therefore
	$q(G_{a,b};x,y)$ is reducible in $\mathbb{Z}[x,y]$.
\end{proof}

Therefore, the family $\{G_{a,b}\}_{a,b\geq 1}$ shows that the loopless assumption in
Theorem~\ref{thm:two-var} is essential.

\section{The irreducibility of multivariate interlace polynomials}

For a graph $G$ and $B\subseteq V(G)$, let $G\nabla B$ be the looped graph obtained from $G$ by toggling
the loop status of every vertex in $B$. For disjoint subsets $A,B\subseteq V(G)$, define
\[
r_G(A,B)=r\bigl(\mathcal{A}\bigl((G\nabla B)[A\cup B]\bigr)\bigr)\]
and 
\[
n_G(A,B)=|A\cup B|-r_G(A,B).
\]
For short, we write $r_G(A,B)$ and $n_G(A,B)$ as $r(A,B)$ and $n(A,B)$ respectively, if the graph $G$ is clear. 

Let
\[
\mathbf{x}=\{x_a:a\in V(G)\},\qquad
\mathbf{y}=\{y_a:a\in V(G)\},
\]
and write
\[
x_A=\prod_{a\in A}x_a,\qquad
y_B=\prod_{b\in B}y_b.
\]

\begin{definition}\emph{\cite{Courcelle}}
	The full multivariate interlace polynomial of $G$ is
	\[
	C_G(u,v;\mathbf{x},\mathbf{y})
	=
	\sum_{\substack{A,B\subseteq V(G)\\ A\cap B=\varnothing}}
	x_Ay_B\,u^{r(A,B)}v^{n(A,B)}.
	\]
\end{definition}

Thus each vertex has three possible states: it is not chosen, it is chosen without toggling
its loop, or it is chosen after toggling its loop.

Clearly, for a graph $G$, the two-variable interlace polynomial $q(G;x,y)$ is obtained from $C_G(u,v;\mathbf{x},\mathbf{y})$ by the following substitution: $u=x-1$, $v=y-1$, $x_a=1$ and $y_a=0$ for all $a\in V(G)$.

In \cite{Courcelle}, the following result was given:
\[
C_G(u,v;\mathbf{x},\mathbf{y})
=
C_{G_1}(u,v;\mathbf{x},\mathbf{y})
C_{G_2}(u,v;\mathbf{x},\mathbf{y}),
\]
when $G$ is disjoint union of $G_1$ and $G_2$.

We shall use the following standard form of Gauss's lemma; see
\cite[Section~9.3]{DummitFoote}.

\begin{lemma}[Gauss's lemma]\label{lem:gauss}
	Let $R$ be a unique factorisation domain with fraction field $K$, and let
	$f\in R[z_1,\ldots,z_m]$ be primitive. Then $f$ is irreducible in
	$R[z_1,\ldots,z_m]$ if and only if it is irreducible in
	$K[z_1,\ldots,z_m]$.
\end{lemma}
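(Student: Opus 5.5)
The plan is to prove the two implications separately. The only real input is the multiplicativity of content, which we prove by reducing modulo a prime. Throughout, call $g\in R[z_1,\ldots,z_m]$ \emph{primitive} if the gcd of its coefficients (its content $c(g)$, defined up to units of $R$) is a unit. We use that the units of $K[z_1,\ldots,z_m]$ are exactly the nonzero constants. We may assume $f$ is nonconstant: a primitive constant is a unit in both rings, so both sides of the equivalence fail.

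\emph{Irreducible in $K[z_1,\ldots,z_m]$ implies irreducible in $R[z_1,\ldots,z_m]$.} Suppose $f=gh$ with $g,h\in R[z_1,\ldots,z_m]$. If $g$ is a constant $r\in R$, then $r$ divides every coefficient of $f$. Primitivity of $f$ then forces $r$ to be a unit of $R$, hence a unit of $R[z_1,\ldots,z_m]$; the same holds for $h$. So in any factorisation into two nonunits, both factors are nonconstant. Such a factorisation is then a factorisation in $K[z_1,\ldots,z_m]$ into two nonunits, contradicting irreducibility over $K$.

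\emph{Key step (Gauss's content lemma): a product of primitive polynomials in $R[z_1,\ldots,z_m]$ is primitive.} Suppose $g_0,h_0$ are primitive but $g_0h_0$ is not. Since $R$ is a UFD, some prime element $p$ divides every coefficient of $g_0h_0$. Reduce modulo $p$: the ring $(R/p)[z_1,\ldots,z_m]$ is an integral domain, because $R/p$ is a domain. The images $\bar g_0,\bar h_0$ are nonzero by primitivity, but $\bar g_0\bar h_0=0$, which is a contradiction. This is the step I expect to carry the substance; everything else is bookkeeping with contents.

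\emph{Irreducible in $R[z_1,\ldots,z_m]$ implies irreducible in $K[z_1,\ldots,z_m]$.} Suppose $f=gh$ with $g,h\in K[z_1,\ldots,z_m]$ both nonconstant.
\begin{enumerate}
\item Clear denominators: choose nonzero $a,b\in R$ with $ag,bh\in R[z_1,\ldots,z_m]$.
\item Write $ag=c(ag)\,g_0$ and $bh=c(bh)\,h_0$ with $g_0,h_0$ primitive. This gives $ab\,f=c(ag)c(bh)\,g_0h_0$.
\item Take contents of both sides. Since $f$ is primitive, the left side has content $ab$. By the content lemma $g_0h_0$ is primitive, so the right side has content $c(ag)c(bh)$. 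Hence $ab=u\,c(ag)c(bh)$ for a unit $u\in R$.
\item Cancelling in the domain $R[z_1,\ldots,z_m]$ gives $f=u^{-1}g_0h_0$.
\end{enumerate}
Here $g_0$ and $h_0$ have the same positive degrees as $g$ and $h$, so neither is a unit. This is a nontrivial factorisation in $R[z_1,\ldots,z_m]$, a contradiction, which completes the proof.
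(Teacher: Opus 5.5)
Your proof is correct and complete. It is the standard argument: the content lemma by reduction modulo a prime $p$ (using that $(R/p)[z_1,\ldots,z_m]$ is a domain), then clearing denominators and comparing contents. The paper gives no proof of its own and simply cites Dummit--Foote, Section~9.3, whose argument rests on the same reduction-modulo-a-prime idea; there it is used to cancel the prime factors of a cleared denominator one at a time, rather than going through multiplicativity of content.

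One small advantage of your version is that it is written directly in $m$ variables, which is the form the paper actually applies with $R=\mathbb{C}[u,v]$. The cited source states the result for $R[x]$ in one variable.
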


We now prove Theorem~\ref{thm:multi-var}.

\begin{proof}[Proof of Theorem~\ref{thm:multi-var}]
	Suppose first that $G$ is disconnected. Then $G=G_1\sqcup G_2$ for two
	nonempty induced subgraphs $G_1$ and $G_2$. By multiplicativity,
	\[
	C_G(u,v;\mathbf{x},\mathbf{y})
	=
	C_{G_1}(u,v;\mathbf{x},\mathbf{y})
	C_{G_2}(u,v;\mathbf{x},\mathbf{y}).
	\]
	Both factors are nonconstant, since each $G_i$ has at least one vertex.
	Hence $C_G$ is reducible.
	
	Conversely, suppose that $G$ is connected. Let
	$R=\mathbb{C}[u,v]$ and $K=\mathbb{C}(u,v)$. We regard $C_G$ as a
	polynomial in the vertex variables $x_a,y_a$, with coefficients in $R$.
	Its constant coefficient is $1$, corresponding to $A=B=\varnothing$.
	Thus its coefficients have no common nonunit divisor in $R$, so $C_G$ is
	primitive. By Lemma~\ref{lem:gauss}, it is enough to prove that $C_G$ is
	irreducible in
	$
	K[x_a,y_a:a\in V(G)].
	$
	
	Assume, to the contrary, that $C_G=FH$ for nonconstant polynomials
	$F,H\in K[x_a,y_a:a\in V(G)]$. Since the constant coefficient of $C_G$
	is $1$, we have $F(0)H(0)=1$, where all vertex variables are set equal to
	zero. After multiplying one factor by a nonzero element of $K$ and the
	other by its inverse, we may assume that
	\[
	F(0)=H(0)=1.
	\]
	
	Let $\ell_a\in\{0,1\}$ denote the loop status of a vertex $a$. The
	monomial $x_a$ arises uniquely from $A=\{a\}$ and $B=\varnothing$, so its
	coefficient in $C_G$ is
	$u^{\ell_a}v^{1-\ell_a}$. Similarly, the monomial $y_a$ arises uniquely
	from $A=\varnothing$ and $B=\{a\}$, and its coefficient is
	$u^{1-\ell_a}v^{\ell_a}$. In particular, both coefficients are nonzero.
	
	The polynomial $C_G$ is multiaffine in the vertex variables: each
	$x_a$ and each $y_a$ has degree at most $1$. Since degrees in a fixed
	variable are additive under multiplication over an integral domain,
	\[
	\deg_z C_G=\deg_zF+\deg_zH
	\]
	for every vertex variable $z$. Thus no vertex variable can occur in both
	$F$ and $H$. Since every $x_a$ and $y_a$ occurs in $C_G$, each of them
	occurs in exactly one factor.
	
	We next show that $x_a$ and $y_a$ must occur in the same factor. Suppose,
	after interchanging $F$ and $H$ if necessary, that $x_a$ occurs in $F$
	and $y_a$ occurs in $H$. Then $F$ is independent of $y_a$ and $H$ is
	independent of $x_a$, and we may write
	$F=F_0+x_aF_1$
	and
	$H=H_0+y_aH_1$,
	where $F_1,H_1\neq 0$. The coefficient of $x_ay_a$ in $FH$, regarded as
	a polynomial in the remaining variables, is then $F_1H_1$, which is
	nonzero because the polynomial ring over $K$ is an integral domain.
	However, no monomial in the defining sum for $C_G$ is divisible by
	$x_ay_a$, since $A\cap B=\varnothing$. This is a contradiction.
	
	Consequently, the factorisation determines a partition
	$V(G)=P\sqcup Q$ such that the two variables $x_a,y_a$ occur in $F$ for
	$a\in P$, and in $H$ for $a\in Q$.
	
	We claim that no ordinary edge joins $P$ and $Q$. Suppose that $ab$ is
	such an edge, with $a\in P$ and $b\in Q$. Since $x_a$ occurs only in
	$F$ and the constant coefficient of $H$ is $1$, the coefficient of the
	monomial $x_a$ in $F$ must equal its coefficient in $C_G$, namely
	$u^{\ell_a}v^{1-\ell_a}$. Similarly, the coefficient of $x_b$ in $H$ is
	$u^{\ell_b}v^{1-\ell_b}$. The only way to obtain the exact monomial $x_ax_b$ in the product $FH$ is
	to multiply the $x_a$-term of $F$ by the $x_b$-term of $H$. Therefore,
	the coefficient of $x_ax_b$ in $FH$ is $u^{\ell_a+\ell_b}v^{2-\ell_a-\ell_b}$.
	
	On the other hand, the monomial $x_ax_b$ in $C_G$ arises uniquely from
	$A=\{a,b\}$ and $B=\varnothing$. Since $a$ and $b$ are adjacent, the
	corresponding adjacency matrix is
	\[
	\begin{pmatrix}
		\ell_a & 1\\
		1 & \ell_b
	\end{pmatrix}.
	\]
	Thus the two computations of the coefficient of $x_ax_b$ are
	\[
	\begin{array}{c|c|c}
		(\ell_a,\ell_b)
		& \text{coefficient in }FH
		& \text{coefficient in }C_G\\ \hline
		(0,0) & v^2 & u^2\\
		(1,0) & uv  & u^2\\
		(0,1) & uv  & u^2\\
		(1,1) & u^2 & uv
	\end{array}
	\]
	and they are different in every case. This contradicts $C_G=FH$.
	Therefore, no ordinary edge joins $P$ and $Q$.
	
	Since $G$ is connected, one of $P$ and $Q$ must be empty. If
	$P=\varnothing$, then $F$ contains no vertex variables and hence
	$F=F(0)=1$. Similarly, if $Q=\varnothing$, then $H=1$. Thus the
	factorisation is trivial. Therefore $C_G$ is irreducible over $K$. By Lemma~\ref{lem:gauss}, it is
	irreducible in
	\[
	R[x_a,y_a:a\in V(G)]
	=
	\mathbb{C}[u,v,x_a,y_a:a\in V(G)].
	\]
\end{proof}

\begin{remark}
	Theorem~\ref{thm:multi-var} should be compared with Theorem~\ref{thm:two-var}.
	For loopless graphs, the two-variable polynomial already detects connectedness. For looped
	graphs this is no longer true, as Section~3 shows. The full multivariate polynomial avoids
	this loss of information by keeping the two chosen states of each vertex separate.
	The proof also works with $\mathbb{C}$ replaced by an arbitrary field $K$:
	$K[u,v]$ is a unique factorisation domain, and the coefficient comparisons remain valid in $K(u,v)$.
\end{remark}

\section*{Acknowledgements}
This work is supported by the National Natural Science Foundation of China (Nos. 12571379, 12571366), Scientific Research Start-Up Foundation of Jimei University (No. ZQ2024116), Fujian Provincial Department of Education (No. JAT251074) and Natural Science
Foundation of Xiamen Municipality (No. 3502Z202673038).

\section*{Declarations}
\noindent
{ \bf Conflict of interest}
The authors declare that they have no conflict of interest.

\end{document}